\let\NAT@parse\undefined
\newtheorem{theorem}{Theorem}[section]
\newtheorem{proposition}[theorem]{Proposition}
\newtheorem{lemma}[theorem]{Lemma}
\newtheorem{corollary}[theorem]{Corollary}
\newtheorem{definition}[theorem]{Definition}
\newcommand{\tr}{\textnormal{tr}}
\DeclareMathOperator{\im}{im}
\DeclareRobustCommand{\der}[1]{%
  \@ifnextchar\bgroup{\@der{#1}}{\@der{}{#1}}}
\newcommand{\@der}[2]{\frac{d#1}{d#2}}
\DeclareRobustCommand{\dder}[1]{%
  \@ifnextchar\bgroup{\@dder{#1}}{\@dder{}{#1}}}
\newcommand{\@dder}[2]{\frac{d^2#1}{d#2^2}}
\DeclareRobustCommand{\pder}[1]{%
  \@ifnextchar\bgroup{\@pder{#1}}{\@pder{}{#1}}}
\newcommand{\@pder}[2]{\frac{\partial#1}{\partial#2}}
\renewcommand*\env@matrix[1][*\c@MaxMatrixCols c]{%
  \hskip -\arraycolsep
  \let\@ifnextchar\new@ifnextchar
  \array{#1}}
\title{\LARGE \bf
Model Reduction of Semistable Distributed Parameter Systems
}
\author{Ingvar Ziemann and Yishao Zhou
\thanks{Ingvar Ziemann is with the School of Electrical Engineering and Computer Science, KTH Royal Institute of Technology, SE-100 44
Stockholm, Sweden
        {\tt\small ziemann@kth.se}}%
\thanks{Yishao Zhou is with the Department of Mathematics, Stockholm University, SE-106 91 Stockholm, Sweden
        {\tt\small yishao@math.su.se}}%
}
\begin{document}

\maketitle
\thispagestyle{empty}
\pagestyle{empty}

\begin{abstract}
The model reduction problem for semistable infinite-dimensional control systems is studied in this paper. In relation to these systems, we study an object we call the semistability Gramian, which serves as a generalization of the ordinary controllability Gramian valid for semistable systems. This Gramian is then given geometric as well as algebraic characterization via a Lyapunov equation. We then proceed to show that under a commutativity assumption relating the original and reduced systems, and as long as the semistability is preserved, we may derive a priori error formulas in $\mathcal{H}_2$-norm in terms of the trace of this Gramian.
\end{abstract}

\section{Introduction}

In this article we investigate the model reduction problem for distributed parameter systems. In particular we are interested in deriving and characterizing an a priori formula for the $\mathcal{H}_2$-error between the original and reduced models for such systems. Here, we restrict ourselves to systems which are exponentially semistable; that is, systems for which the dynamics are guaranteed to eventually converge, but to where precisely is allowed to depend on the initial conditions. A large motivation for the study of semistability is system thermodynamics, which naturally exhibit semistability for certain boundary conditions. However, partial differential systems such as these suffer from infinite-dimensionality which makes them computationally intense. As such it is important to find approximating systems which are close in norm and behave similarly. Moreover, semistability is of increasing practical interest as the importance of networked systems, which are often semistable, continues to grow. 

Regarding semistability, recent advances have been made in the context of networked systems by for instance \cite{hui2011optimal} and even specifically in model reduction by \cite{besselink2016clustering}. In \cite{cheng2017reduction},  the authors introduce the idea of an augmented Gramian. This concept turns out to be central for us too and many of our theorems are generalizations of theirs, carried over from the setting of network systems to the more general case of semistable distributed parameter systems. 

However, not much recent work in distributed parameter systems has been completed in the context of semistability, except perhaps \cite{hui2013semistability}. Knowledge of model reduction for distributed parameter systems is also fairly sparse, with most work focused on Hankel norm approximations as in \cite{sasane2002hankel}, or directly on numerical schemes, as in \cite{atwell2004reduced}. To this end, \cite{curtain2003model} states that our numerical capabilities far outweigh our theoretical understanding of these approximations. In contrast, the finite-dimensional theory of model reduction puts much emphasis on the $\mathcal{H}_2$-norm. Our contribution here is to extend known results concerning the $\mathcal{H}_2$-norm problem to the infinite-dimensional setting.

\section{Problem Setting}
By model reduction, we mean that given a system:
\begin{align}
\begin{cases}
\dot x = Ax+ Bu\\
y=Cx, x(0)=x_0\\
\end{cases}
\tag{$\Sigma$}
\end{align}
to find a reduced system
\begin{align}
\begin{cases}
\dot v = \hat Av+ \hat Bu\\
\hat y=\hat C v, v(0)=v_0\\
\end{cases}
\tag{$\hat \Sigma$}
\end{align}
which approximates the initial system well both qualitatively and quantitatively. This paper considers a class of model reductions which, roughly speaking, arise when one projects the dynamics onto a sub-collection of eigenvectors of $A$. We shall later see that this automatically guarantees the preservation of the important system-theoretic properties of (semi)stability and approximate controllability. Most importantly, we will be able to give an exact a priori error formula for these in $\mathcal{H}_2$-norm in Theorem \ref{mytheorem}.

We do not make any assumptions about the dimensionality of the reduced model and it is interesting to note the possibility for the reduced model to still be infinite-dimensional. This is for instance the case when one starts with a PDE, say the heat equation, on some high-dimensional manifold $M$ and then reduces the number of equations, resulting in a PDE on a lower-dimensional manifold $N$, $\dim N \ll \dim M$. Note that in this example the state space of both the original and reduced systems are typically function spaces such as $L^2(M), L^2(N)$ or corresponding Sobolev spaces and thus infinite-dimensional. As for reductions that result in finite-dimensional models, examples include projections onto finite subcollections of eigenspaces of the $A$-operator.

To make matters precise, assume that $\Sigma(A,B,C)$ and $\hat \Sigma(\hat A , \hat B, \hat C)$ are such that $A, \hat A$ generate $C_0$-semigroups, $S(t)$ and $\hat S(t)$, on separable Hilbert spaces $X$, $V\subset X$ and $B, \hat B$, $C,\hat C$ are bounded linear operators. We denote the inner product on $X$ as well as $V$ by $\langle \cdot,  \cdot \rangle$ and the associated norm by $\| \cdot \|$. The inputs and outputs $u,y$ are assumed to lie in, possibly infinite-dimensional, Hilbert spaces $U$ and $Y$. In the finite-dimensional case this corresponds to finding matrices $(\hat A , \hat B, \hat C)$ which are of lower rank than $(A,B,C)$.

If $A$ is bounded the associated semigroup takes the form $S(t)=e^{At}=\sum_{i=0}^\infty \frac{A^i t^i}{i!}$. We consider the more general situation where $A$ (and $\hat A$ analogously) is only defined on a subspace $D(A)\subset X$. It will also often be necessary to talk about the adjoint of these operators. The semigroup $S^*(t)$ generated by $A^*$ actually coincides with $[S(t)]^*$, the adjoint of the semigroup generated by $A$. That is, the operation of taking adjoints commutes with that of taking semigroups.

Let $H\in \mathfrak{B}(X)$ be the space of bounded linear operators on $X$. Just as the norm on $X$, we also denote the supremum norm on $\mathfrak{B}(X)$ by $\| \cdot \|$ and it should be clear from context which is used. For $H \in \mathfrak{B}(X)$, we define its trace by $\tr H = \sum_{i=1}^\infty \langle H e_i, e_i \rangle$ where $(e_i)$ is any orthogonal basis for $X$. If $\tr HH^*$ is finite, we say that $H$ is Hilbert-Schmidt.

The impulse response of  a system $\Sigma(A,B,C)$ is given by $h(t)=CS(t)B$ for all $t\geq 0$. This allows us to define the $\mathcal{H}_2$-norm as 
$\|\Sigma \|_{\mathcal{H}_2}=\sqrt{\int_0^\infty \tr\big(h(t)h^*(t)\big)dt}$. As for integrals, they are to be interpreted depending on the context; integrals of functions are Bochner integrals and integrals of linear operators are Pettis integrals. See \cite{diestel1977vector} for details.

We also recall the following: The reachability space, $\mathcal{R}$, of $\Sigma(A,B,-)$ is given by set of all states that can be attained by some control from the origin. If $\mathcal{R}$ is dense in $X$, we say that $\Sigma(A,B,-)$ is approximately controllable.  For brevity of exposition we will focus exclusively on approximate controllability, but most results carry over to approximate observability by adjusting definitions appropiately and duality. A more detailed discussion of all these definitions and concepts can be found in \cite{curtain2012introduction}.

\section{Semistability}
Now, we make precise the notion of stability studied here.
\begin{definition}

\label{exponentialsemistability}
Suppose $A$ generates a $C_0$-semigroup $S(t)$ on $X$. $A$, $S(t)$ are said to be exponentially semistable if for every $x\in X$ there exists $x_e \in \ker A$  and scalars $M,\mu>0$ such that $\|S(t)x-x_e\|\leq Me^{-\mu t} \| x-x_e\|$. 
\end{definition}
Note that every member of $\ker A$ is an equilibrium point of the dynamical system given by $S(t)$. That is, we have that $S(t) \ker A = \ker A$. Further, since $S(t)x\to x_e$ strongly as $t\to\infty$, it makes sense to call $x_e\in \ker A$ the equilibrium point corresponding to $x\in X$ and standard arguments show that any such equilibrium is Lyapunov stable. To familiarize us with the definition, we also note that in the finite-dimensional case, $A \in \mathfrak{B}(\mathbb{C}^n)$, \cite{bernstein1995lyapunov} shows that $A$ is exponentially semistable if and only if $\Re \lambda \leq 0$ for all eigenvalues $\lambda$ of $A$ and all eigenvalues with $0$ real part are semisimple and have no imaginary part.

Motivated by the existence of the strong limit for each $x$ of $S(t)x$, we define $S_\infty = \lim_{t\to \infty} S(t)$. This mapping takes initial conditions to corresponding equilibrium points.

\begin{lemma}
\label{sinftyconverges}
If $S(t)$ is an exponentially semistable semigroup the limiting operator $S_\infty: X \to \ker A\subset X$ of $S(t), t \to \infty$ exists, is bounded and idempotent.
\end{lemma}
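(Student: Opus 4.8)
The plan is to establish the three assertions — existence (together with linearity), boundedness, and idempotency — separately, the only delicate point being boundedness. Throughout I will use the facts recorded after Definition \ref{exponentialsemistability}, namely that $\ker A\subseteq D(A)$ and that every $x_e\in\ker A$ is an equilibrium, $S(t)x_e=x_e$ for all $t\geq 0$.

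For existence I would simply unwind Definition \ref{exponentialsemistability}: given $x\in X$, choose the associated $x_e\in\ker A$ and constants $M,\mu>0$ so that $\|S(t)x-x_e\|\leq Me^{-\mu t}\|x-x_e\|\to 0$ as $t\to\infty$. Hence the strong limit $S_\infty x:=\lim_{t\to\infty}S(t)x$ exists and equals $x_e$; uniqueness of limits in $X$ makes the assignment $x\mapsto S_\infty x$ well defined, and since each $S(t)$ is linear, the pointwise limit $S_\infty$ is linear. By construction $S_\infty x=x_e\in\ker A$, so $\ran S_\infty\subseteq\ker A$, which already gives the stated codomain.

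For boundedness the subtlety is that the constants $M,\mu$ in the estimate above depend on $x$, so the inequality by itself does not produce a uniform bound. Instead I would invoke the uniform boundedness principle: for each fixed $x$ the orbit $\{S(t)x:t\geq 0\}$ is norm-convergent, hence bounded, so the family $\{S(t):t\geq 0\}\subset\mathfrak{B}(X)$ is pointwise bounded on the Banach space $X$; Banach--Steinhaus then yields $K:=\sup_{t\geq 0}\|S(t)\|<\infty$. Passing to the limit in $\|S(t)x\|\leq K\|x\|$ gives $\|S_\infty x\|\leq K\|x\|$, so $S_\infty\in\mathfrak{B}(X)$.

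Finally, for idempotency, since every $x_e\in\ker A$ satisfies $S(t)x_e=x_e$ for all $t$, we get $S_\infty x_e=x_e$; applying this with $x_e=S_\infty x$ (legitimate because $\ran S_\infty\subseteq\ker A$) gives $S_\infty^2 x=S_\infty(S_\infty x)=S_\infty x$ for every $x\in X$, so $S_\infty$ is idempotent, and in fact $\ran S_\infty=\ker A$. I expect the boundedness step to be the only place requiring genuine care: the key point is recognizing that one must pass through pointwise boundedness and Banach--Steinhaus rather than attempt to use the $x$-dependent exponential estimate directly.
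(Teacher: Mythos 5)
Your proof is correct, but it reaches existence by a genuinely different route than the paper. You extract only the strong (pointwise) limit: for each $x$ the definition gives $S(t)x\to x_e$ directly, and you then obtain boundedness of $S_\infty$ by applying Banach--Steinhaus to the pointwise-bounded family $\{S(t)\}_{t\geq 0}$ and passing to the limit in $\|S(t)x\|\leq K\|x\|$. The paper instead shows that $t\mapsto S(t)$ is Cauchy in the operator norm of $\mathfrak{B}(X)$ --- estimating $\|S(t)-S(s)\|\leq 2Me^{-\mu\min(s,t)}(1+M')$ after using Banach--Steinhaus to produce the uniform bound $M'$ on $\|S(t)\|$ and hence on $\|x_e\|$ --- and then invokes completeness of $\mathfrak{B}(X)$, which delivers existence and boundedness in one stroke and moreover yields convergence in operator norm rather than merely strongly (a stronger fact that is in the spirit of statement 2 of Theorem \ref{semichars}). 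The trade-off: the paper's operator-norm estimate tacitly treats the constants $M,\mu$ of Definition \ref{exponentialsemistability} as independent of $x$ when taking the supremum over $\|x\|=1$, whereas the definition only supplies them per $x$; your argument, which explicitly flags this dependence, never needs such uniformity and proves exactly what the lemma asserts (the text preceding the lemma indeed speaks of the strong limit). Two minor remarks: your clause ``norm-convergent, hence bounded'' only bounds the orbit on a tail $[T,\infty)$, so you should append either strong continuity on $[0,T]$ or, as the paper does, the direct estimate $\|S(t)x\|\leq\|x_e\|+M\|x-x_e\|$ valid for all $t$; and your idempotency argument via $S(t)x_e=x_e$ for $x_e\in\ker A$ is essentially identical to the paper's.
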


\begin{proof}
We begin by estimating the norm:
\begin{align*}
\|S(t)-S(s)\|&=\sup_{\|x\|=1,x\in X} \|S(t)x-S(s)x\|\\
&=\sup_{\|x\|=1,x\in X} \|S(t)x-x_e-(S(s)x-x_e)\|\\
&\leq \sup_{\|x\|=1,x\in X} 2Me^{-\mu \min(s,t)} \| x-x_e\|\\
&\leq \sup_{\|x\|=1,x\in X} 2Me^{-\mu \min(s,t)}(1+\|x_e\|).
\end{align*}
Note that this still depends on the distance from of the origin of the equilibrium point $\|x_e\|$. To alleviate this, we will establish a uniform bound on the family $S(t)$. Observe that by assumption of semistability, for each $x\in X$
\begin{align*}
\|S(t)x\| \leq \|x_0\|+\|S(t)x-x_0\| \leq \|x_0\|+ M \|x -x_0\|
\end{align*}
so that $\sup_{t}\|S(t)x\|< \infty$ for each $x\in X$. By the Banach-Steinhaus Theorem this means that $\|S(t)\|$ is uniformly bounded, by say $M'$. Suppose now that there exists $x$ with $\|x_e\|>M'$. If we estimate $\|x_e\|$ we find that
\begin{align*}
\|x_e\|&=\lim_{t\to \infty} \|S(t)x\| \leq \lim_{t\to \infty} \|S(t)\|\|x\|\\
&=\lim_{t\to \infty} \|S(t)\| \leq M'.
\end{align*}
contradicting $\|x_e\|>M'$. Hence
\begin{align*}
\|S(t)-S(s)\|&\leq 2Me^{-\mu \min(s,t)}(1+M')
\end{align*}
and so since $S(t)\in \mathfrak{B}(X)$ is Cauchy in $t$, there exists a limiting operator $S_\infty$ which is bounded by completeness of $\mathfrak{B}(X)$. Moreover, $S_\infty x = x_e \in \ker A$ and moreover
\begin{align*}
0&=\|x_e-x_e\| = \|S(t)x_e-x_e\| = \lim_{t\to \infty} \|S(t)x_e-x_e\|\\
&=\|S_\infty x_e-x_e\|
\end{align*}
so that $S_\infty x_e =x_e$. That is, $\forall x\in X$, $S_\infty^2 x = S_\infty x$.
\end{proof}
 The lemma above emphasizes the importance of the generator kernel, the proof of which shows us that $S(t)-S_\infty$ has nice stability properties. The operator $S_\infty$ has a particularly nice interpretation when $A$ is self-adjoint.

\begin{corollary}
Suppose that $A$ is self-adjoint. Then $S_\infty$ is the orthogonal projection onto the kernel of $A$.
\end{corollary}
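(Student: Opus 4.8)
The plan is to show that the bounded idempotent $S_\infty$ from Lemma \ref{sinftyconverges} is in fact self-adjoint when $A$ is, since a bounded idempotent which is self-adjoint is precisely an orthogonal projection, and then to identify its range and kernel with $\ker A$ and $(\ker A)^\perp$ respectively. First I would recall that for a self-adjoint generator $A$, the adjoint semigroup satisfies $S^*(t) = [S(t)]^* = S(t)$, as noted in the excerpt. Passing to the strong limit (which exists by semistability and commutes with taking adjoints, since adjoints are strongly--weakly continuous and here we even have norm convergence of $S(t)$ to $S_\infty$), this gives $S_\infty^* = S_\infty$. Combined with $S_\infty^2 = S_\infty$ from the lemma, $S_\infty$ is an orthogonal projection onto its range $\ran S_\infty$.

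Next I would identify the range. From Lemma \ref{sinftyconverges} we already know $\ran S_\infty \subseteq \ker A$, and that $S_\infty x_e = x_e$ for every $x_e \in \ker A$, which gives the reverse inclusion $\ker A \subseteq \ran S_\infty$. Hence $\ran S_\infty = \ker A$. Since an orthogonal projection is determined by its range, $S_\infty$ is the orthogonal projection onto $\ker A$. As a consistency check one can verify $\ker S_\infty = (\ker A)^\perp$: indeed, for a self-adjoint operator $\overline{\ran A} = (\ker A^*)^\perp = (\ker A)^\perp$, and one expects the ``stable subspace'' on which $S(t)x \to 0$ to coincide with $(\ker A)^\perp$; this is exactly what the orthogonal projection picture predicts.

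The only genuinely delicate point is justifying $S_\infty^* = S_\infty$, i.e.\ that taking the limit $t \to \infty$ commutes with taking adjoints. This is where I would be most careful. Fortunately the proof of Lemma \ref{sinftyconverges} established that $S(t) \to S_\infty$ in the operator norm of $\mathfrak{B}(X)$, not merely strongly; since the adjoint map $H \mapsto H^*$ is an isometry on $\mathfrak{B}(X)$, it is in particular norm-continuous, so $S^*(t) \to S_\infty^*$ in norm as well. But $S^*(t) = S(t)$ for all $t$ by self-adjointness of $A$, so the two limits coincide and $S_\infty^* = S_\infty$. With norm convergence already in hand from the preceding lemma, this step is in fact short, and no subtler weak-operator-topology argument is needed.
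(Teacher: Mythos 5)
Your proposal is correct, and it supplies an argument the paper itself omits (the corollary is stated with no proof). Your route is the natural one: self-adjointness of $A$ gives $S^*(t)=S(t)$, the norm convergence already established in Lemma \ref{sinftyconverges} lets you pass adjoints through the limit to get $S_\infty^*=S_\infty$, and combined with idempotence this identifies $S_\infty$ as the orthogonal projection onto its range. The only point worth tightening is the reverse inclusion $\ker A\subseteq \ran S_\infty$: the lemma's proof shows $S_\infty x_e=x_e$ only for those $x_e$ arising as equilibria $S_\infty x$, but the statement for \emph{every} $x_e\in\ker A$ follows from the fact, noted in the paper just after Definition \ref{exponentialsemistability}, that $S(t)$ fixes $\ker A$ pointwise (since $\frac{d}{dt}S(t)x_e=S(t)Ax_e=0$), whence $S_\infty x_e=\lim_{t\to\infty}S(t)x_e=x_e$. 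With that one sentence added, the argument is complete.
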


We now proceed to characterize semistability via the operator $S_\infty$.

\begin{theorem}
\label{semichars}
If $S(t)$ is a $C_0$-semigroup with generator $A$ the following are equivalent:
\begin{enumerate}
\item $S(t)$ is exponentially semistable.
\item There exists a bounded operator $S_\infty :X \to \ker A$ which is idempotent on $\ker A$ and constants $\mu,L>0$ such that for every $x\in X$  $\|(S(t)-S_\infty)x\|\leq  Le^{-\mu t}\|x\|.$
\end{enumerate}
\end{theorem}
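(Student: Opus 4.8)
The plan is to prove the two implications separately. The direction $(1)\Rightarrow(2)$ is essentially a repackaging of Lemma \ref{sinftyconverges} and its proof, so most of the work has already been done: given exponential semistability, Lemma \ref{sinftyconverges} supplies a bounded idempotent $S_\infty:X\to\ker A$. What remains is to extract the \emph{uniform} exponential decay estimate $\|(S(t)-S_\infty)x\|\le Le^{-\mu t}\|x\|$ with constants independent of $x$. First I would recall from the proof of Lemma \ref{sinftyconverges} that $\|x_e\|=\|S_\infty x\|\le M'\|x\|$ where $M'=\sup_t\|S(t)\|<\infty$ (Banach--Steinhaus), and that $\|S(t)x_e-x_e\|=0$ for the equilibrium $x_e$ of $x$. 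Then, using $S(t)x_e=x_e$ together with the semistability bound applied to $x$,
\begin{align*}
\|(S(t)-S_\infty)x\| &= \|S(t)x - x_e\| = \|S(t)x - x_e - (S(t)x_e - x_e)\|\\
&= \|S(t)(x-x_e)\| \le Me^{-\mu t}\|x-x_e\|\le Me^{-\mu t}(1+M')\|x\|,
\end{align*}
so $(2)$ holds with $L=M(1+M')$ and the same $\mu$.

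For the converse $(2)\Rightarrow(1)$, I would start from the given operator $S_\infty$ and decay estimate and verify the defining inequality in Definition \ref{exponentialsemistability}, i.e. produce, for each $x$, an equilibrium $x_e\in\ker A$ and constants making $\|S(t)x-x_e\|\le Me^{-\mu t}\|x-x_e\|$ hold. The natural candidate is $x_e:=S_\infty x$. Two things must be checked. First, that $x_e\in\ker A$: since $S_\infty$ maps into $\ker A$ by hypothesis, and $A$ generates $S(t)$ with $S(t)\ker A=\ker A$ (every element of $\ker A$ is an equilibrium), this is immediate; but I should also confirm $S(t)x_e=x_e$, which follows because $\|(S(t)-S_\infty)x_e\|\le Le^{-\mu t}\|x_e\|\to 0$ forces $\lim_t S(t)x_e=S_\infty x_e$, while $S_\infty$ idempotent on $\ker A$ gives $S_\infty x_e=S_\infty^2 x=S_\infty x=x_e$, and combined with the fact that $t\mapsto S(t)x_e$ is the solution of an autonomous problem started at an equilibrium one gets $S(t)x_e\equiv x_e$ (alternatively, $A x_e=0$ directly gives $\frac{d}{dt}S(t)x_e=S(t)Ax_e=0$). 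Second, with $S(t)x_e=x_e$ in hand,
\begin{align*}
\|S(t)x-x_e\| = \|S(t)x - S_\infty x - (S(t)x_e - S_\infty x_e)\| = \|(S(t)-S_\infty)(x-x_e)\|\le Le^{-\mu t}\|x-x_e\|,
\end{align*}
which is exactly the required estimate with $M=L$.

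The main obstacle, and the only nonroutine point, is the bookkeeping around the hypothesis in $(2)$ that $S_\infty$ is merely assumed \emph{idempotent on $\ker A$} and bounded, rather than globally idempotent: one must be careful that $x_e=S_\infty x$ is genuinely fixed by $S_\infty$ and by the whole semigroup, and that the subtraction trick $S(t)x_e=x_e$ is legitimately available before invoking it. I expect the cleanest route is to first establish $S(t)|_{\ker A}=\mathrm{id}$ from the decay estimate restricted to $\ker A$ (as above), then use it freely; everything else is the two short displayed computations. I would also note in passing that $(2)$ forces $S_\infty=\lim_{t\to\infty}S(t)$ in the strong operator topology and hence that $S_\infty$ is automatically idempotent on all of $X$, so the two formulations of the limiting operator agree, but this observation is not needed for the equivalence itself.
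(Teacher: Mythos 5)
Your proof is correct and takes essentially the same route as the paper's: the direction $(1)\Rightarrow(2)$ invokes Lemma \ref{sinftyconverges} and then bounds $\|x-x_e\|$ by a constant multiple of $\|x\|$ (the paper uses $\|I-S_\infty\|$ where you use $1+M'$), while $(2)\Rightarrow(1)$ rests on the identity $(S(t)-S_\infty)x=(S(t)-S_\infty)(x-S_\infty x)$ with $x_e=S_\infty x$ as the equilibrium. Your extra bookkeeping verifying $S(t)x_e=x_e$ and the use of idempotence on $\ker A$ merely makes explicit what the paper's computation uses implicitly.
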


\begin{proof}
1. implies 2. by Lemma \ref{sinftyconverges} and since
\begin{align*}
\|(S(t)-S_\infty)x\|&=\|S(t)x-x_e\|\leq M e^{-\mu t} \|x-x_e\|\\
&= M e^{-\mu t} \|x-S_\infty x\|\\
& \leq \|I-S_\infty\| M e^{-\mu t} \|x\|.
\end{align*}
so $S_\infty$ is the desired operator. Conversely, it is easy to see that 2. implies 1. since one may write
\begin{align*}
\|(S(t)x-S_\infty x)\|&=\|(S(t)-S_\infty)(x- S_\infty x)\|\\&\leq L e^{-\mu t} \| x-S_\infty x\|
\end{align*}
so $S_\infty x$ is the equilibrium point corresponding to $x$.
\end{proof}

The theorem makes precise that our equilibria depend on the initial condition in the sense that the dynamics governed by $S(t)-S_\infty$ possesses a unique equilibrium.

\section{The Gramian}

The ordinary controllability Gramian is not suitable for our analysis, since $A$ having a nontrivial kernel results in an ill-defined integral. To alleviate this, we use a trick first employed in \cite{cheng2017reduction}, but adjusted to our more general situation.

\begin{definition}
\label{semigram}
The semistability Gramian of an exponentially semistable system $\Sigma(A,B, -)$  is given by
\begin{align*}
P_\infty=\int_0^\infty (S(t)-S_\infty)BB^*(S(t)-S_\infty)^* dt
\end{align*}
where the integral is taken in the sense of Pettis, see \cite{diestel1977vector}.
\end{definition}

If we had not adjusted by $S_\infty$ in the definition above, the integral would not converge. We now show that this adjustment assures convergence.

\begin{lemma}
The semistability Gramian of an exponentially semistable system $\Sigma(A,B,-)$ exists and is bounded; $P_\infty\in \mathfrak{B}(X)$.
\end{lemma}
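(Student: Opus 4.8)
The plan is to reduce the convergence of the operator-valued Pettis integral to the absolute convergence of the scalar functions $t\mapsto \langle (S(t)-S_\infty)BB^*(S(t)-S_\infty)^*x,y\rangle$, and then to produce $P_\infty$ via the Riesz representation theorem for bounded sesquilinear forms on a Hilbert space.

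First I would record the decay estimate. By Theorem \ref{semichars}, exponential semistability furnishes constants $L,\mu>0$ with $\|(S(t)-S_\infty)x\|\le Le^{-\mu t}\|x\|$, i.e. $\|S(t)-S_\infty\|\le Le^{-\mu t}$; since $\|T^*\|=\|T\|$, the same bound holds for $(S(t)-S_\infty)^*$. Abbreviating $T(t):=(S(t)-S_\infty)BB^*(S(t)-S_\infty)^*$, we then get for all $x,y\in X$
\[
|\langle T(t)x,y\rangle| = |\langle B^*(S(t)-S_\infty)^*x,\, B^*(S(t)-S_\infty)^*y\rangle| \le \|B\|^2 L^2 e^{-2\mu t}\|x\|\,\|y\|,
\]
so $t\mapsto \langle T(t)x,y\rangle$ is in $L^1(0,\infty)$. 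For measurability I would note that $X$ is reflexive, hence the adjoint semigroup $S^*(t)$ is again $C_0$ and $t\mapsto (S(t)-S_\infty)^*x$ is (strongly) continuous, so $t\mapsto \langle T(t)x,y\rangle$ is continuous, in particular measurable.

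Next I would set $b(x,y):=\int_0^\infty \langle T(t)x,y\rangle\,dt$, which by the above is an absolutely convergent integral defining a sesquilinear form with $|b(x,y)|\le \tfrac{L^2\|B\|^2}{2\mu}\|x\|\,\|y\|$ (and $b(x,x)\ge 0$, so the form, and the operator below, are positive semidefinite). By the Riesz representation theorem for bounded sesquilinear forms there is a unique $P_\infty\in\mathfrak{B}(X)$ with $\langle P_\infty x,y\rangle=b(x,y)$ for all $x,y$ and $\|P_\infty\|\le \tfrac{L^2\|B\|^2}{2\mu}$. To see that this $P_\infty$ is the Pettis integral $\int_0^\infty T(t)\,dt$, recall that the functionals $H\mapsto\langle Hx,y\rangle$ separate points of $\mathfrak{B}(X)$; for each such functional we have $\int_0^\infty \langle T(t)x,y\rangle\,dt=\langle P_\infty x,y\rangle$, with the improper integral understood as $\lim_{n\to\infty}\int_0^n T(t)\,dt$, which converges in operator norm because $\|T(t)\|\le L^2\|B\|^2 e^{-2\mu t}$ is integrable. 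This is precisely the defining property of the Pettis integral, so $P_\infty$ exists and lies in $\mathfrak{B}(X)$.

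The main obstacle is essentially bookkeeping rather than analysis: one must be careful that the weak-measurability-plus-separation argument really promotes the scalar identities to an honest operator-valued statement, and that the improper integral over the unbounded interval $[0,\infty)$ is handled consistently with the definition of the Pettis integral in \cite{diestel1977vector}. Once the exponential estimate from Theorem \ref{semichars} is in hand, everything else is routine.
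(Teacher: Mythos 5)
Your proof is correct, but it constructs $P_\infty$ by a genuinely different mechanism than the paper. The paper works with the truncated strong integrals $P_t=\int_0^t (S(s)-S_\infty)BB^*(S(s)-S_\infty)^*\,ds$, derives the same pointwise bound $\|P_tx\|\le \tfrac{L^2\|B\|^2}{2\mu}\|x\|$ from Theorem \ref{semichars}, and then invokes Banach--Steinhaus to get a $t$-uniform bound on $\|P_t\|$ before passing to the limit; you instead define the bounded sesquilinear form $b(x,y)=\int_0^\infty\langle T(t)x,y\rangle\,dt$ and obtain $P_\infty\in\mathfrak{B}(X)$ at one stroke from the Riesz representation theorem, then identify it with the Pettis integral. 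The two routes share the essential ingredient (the exponential decay of $S(t)-S_\infty$ from Theorem \ref{semichars}), but yours buys something the paper's argument leaves implicit: the paper writes $\|\lim_{t\to\infty}P_t\|=\lim_{t\to\infty}\|P_t\|$ without actually establishing that $P_t$ converges in any topology (uniform boundedness alone does not give this), whereas your weak construction produces the limiting operator directly, and your norm-Cauchy estimate $\|\int_m^n T(t)\,dt\|\le L^2\|B\|^2\int_m^n e^{-2\mu t}\,dt$ supplies the convergence cleanly. One caveat worth flagging: you promote the pointwise estimate of Theorem \ref{semichars} to the operator-norm bound $\|S(t)-S_\infty\|\le Le^{-\mu t}$, which is legitimate given that the theorem quantifies $L,\mu$ uniformly over $x$; the paper's own proof hedges on this uniformity (``the constants $L,\mu$ a priori depend on $x$'') and uses Banach--Steinhaus precisely to avoid relying on it, so you should be explicit that you are using the uniform version of the decay estimate.
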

\begin{proof}
Define a family of operators
\begin{align*}
P_t=\int_0^t (S(s)-S_\infty)BB^*(S(s)-S_\infty)^* ds.
\end{align*}
Using Theorem \ref{semichars} to bound $(S(t)-S_\infty)$ by an exponential growth condition, we obtain pointwise
\begin{align*}
\| P_tx\|&=\left\|\int_0^t (S(s)-S_\infty)BB^*(S(s)-S_\infty)^*x ds  \right\|   \\
&\leq \int_0^t \| (S(s)-S_\infty)BB^*(S(s)-S_\infty)^* x\| ds \\
&\leq \|B\|^2  L^2 \int_0^te^{-2\mu s}  ds\|x\| =  \|B\|^2L^2 \frac{1-e^{-2\mu t}}{2\mu}\|x\|\\
&\leq \frac{\|B\|^2 L^2}{2\mu}\|x\|.
\end{align*}
Observe that the constants $L, \mu$ a priori depend on $x$. More precisely there exists a pointwise norm bound for $P_t x, x\in X$ which however is independent of $t$. 
\begin{align*}
\|P_\infty\| = \| \lim_{t \to \infty} P_t \| = \lim_{t \to \infty} \| P_t \| \leq \lim_{t\to \infty} K =K
\end{align*}
by Banach-Steinhaus, which gives $\|P_t \| \leq K$ uniformly.
\end{proof}

Now, we are ready to extend the classic Lyapunov equation result to the semistability Gramian.

\begin{theorem}
\label{semiLyapunov}
For every $x\in D(A^*)$, $P_\infty$ satisfies the semistability  Lyapunov equation
\begin{align*}
AP_\infty x+P_\infty A^*x=-(I-S_\infty)BB^*(I-S_\infty)^*x.
\end{align*}
\end{theorem}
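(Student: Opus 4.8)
The plan is to mimic the classical derivation of the Lyapunov equation from the controllability Gramian, but carefully tracking the $S_\infty$ correction. Write $T(t) = S(t) - S_\infty$, so that $P_\infty = \int_0^\infty T(t) BB^* T(t)^* \, dt$. The key structural fact I would establish first is that $T(t)$ behaves, on the relevant subspace, like a $C_0$-semigroup with generator $A$: concretely, for $x \in D(A^*)$ one has $T(t)^* x \in D(A^*)$ and $\frac{d}{dt} T(t)^* x = A^* T(t)^* x = T(t)^* A^* x$. This follows because $S_\infty$ maps into $\ker A$, hence $A^* S_\infty^* x$ vanishes in the appropriate weak sense (using $\langle A^* S_\infty^* x, z\rangle = \langle S_\infty^* x, A z\rangle$ and $S_\infty z \in \ker A$ together with idempotency), so differentiating $S(t)^* x$ and subtracting the constant $S_\infty^* x$ leaves the semigroup derivative intact. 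The exponential bound $\|T(t)^* x\| \le L e^{-\mu t}\|x\|$ from Theorem \ref{semichars} guarantees all the integrals and boundary terms at $t = \infty$ vanish.

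Next I would compute $\frac{d}{dt}\langle P_\infty A^* x, y\rangle$-type expressions, or more directly differentiate the integrand. For $x, y \in D(A^*)$, consider
\begin{align*}
\frac{d}{dt} \langle BB^* T(t)^* x, T(t)^* y\rangle = \langle BB^* A^* T(t)^* x, T(t)^* y\rangle + \langle BB^* T(t)^* x, A^* T(t)^* y\rangle.
\end{align*}
Integrating from $0$ to $\infty$: the left side telescopes to $\lim_{t\to\infty}\langle BB^* T(t)^* x, T(t)^* y\rangle - \langle BB^* T(0)^* x, T(0)^* y\rangle$. The limit is $0$ by the exponential decay, and $T(0) = S(0) - S_\infty = I - S_\infty$, so the boundary term is $-\langle BB^* (I-S_\infty)^* x, (I-S_\infty)^* y\rangle = -\langle (I-S_\infty) BB^* (I-S_\infty)^* x, y\rangle$. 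On the right side, pulling $A^*$ past the integral (justified by boundedness of the integrand estimates and that $A^*$ is closed) gives $\langle A P_\infty x, y\rangle + \langle P_\infty A^* x, y\rangle$, using $\langle BB^* A^* T(t)^* x, T(t)^* y\rangle$ summed to $\langle P_\infty A^* x, y\rangle$ and the symmetric term to $\langle A P_\infty x, y\rangle$ after recognizing $\int_0^\infty T(t) BB^* A^* T(t)^* x\,dt = \overline{(\cdot)}$ type manipulations; one should check $P_\infty x \in D(A)$ here. Since $y \in D(A^*)$ is arbitrary and $D(A^*)$ is dense (as $A$ generates a $C_0$-semigroup on a Hilbert space), the identity $A P_\infty x + P_\infty A^* x = -(I - S_\infty) BB^*(I - S_\infty)^* x$ follows.

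The main obstacle I anticipate is the justification of exchanging the unbounded operator $A$ with the Pettis integral defining $P_\infty$ — that is, showing $P_\infty x \in D(A)$ for $x \in D(A^*)$ and that $A P_\infty x = \int_0^\infty A T(t) BB^* T(t)^* x \, dt$ in a rigorous sense. The clean way around this is to avoid differentiating inside the integral and instead work entirely with the weak (bilinear) formulation against test vectors $y \in D(A^*)$, use the fundamental theorem of calculus for the scalar function $t \mapsto \langle BB^* T(t)^* x, T(t)^* y\rangle$, and only at the very end invoke closedness of $A$ (equivalently, that $A^*$ is densely defined and $A = A^{**}$) to conclude that the linear functional $y \mapsto \langle P_\infty x, A^* y\rangle$ being bounded forces $P_\infty x \in D(A^{**}) = D(A)$ with the stated value. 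Verifying that $t \mapsto T(t)^* y$ is differentiable with derivative $A^* T(t)^* y$ for $y \in D(A^*)$ — the semigroup-like property of $T(t)$ — is the other technical point requiring care, but it reduces to the observation that $S_\infty^* y$ lies in the kernel of (the weak extension of) $A^*$ because $S_\infty$ ranges in $\ker A$.
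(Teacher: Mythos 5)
Your proposal follows essentially the same route as the paper's proof: differentiate the scalar function $t \mapsto \langle B^*[S(t)-S_\infty]^*x, B^*[S(t)-S_\infty]^*x'\rangle$, integrate over $[0,\infty)$ using the exponential bound from Theorem \ref{semichars} to kill the term at infinity and justify integrability, use $S_\infty A = 0$ (equivalently $AS_\infty = 0$ on the adjoint side) to absorb the $S_\infty$ correction, and pass from the weak bilinear identity to the strong form by density of $D(A^*)$. Your added care about why $P_\infty x \in D(A)$ (via closedness and $A = A^{**}$) is a point the paper glosses over, but the argument is the same one.
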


\begin{proof}
Let $x,x' \in D(A^*)$ and observe that, if integrable, we have formally
\begin{align*}
&\int_0^\infty \der{t} \langle B^* [S(t)-S_\infty]^*x,B^*[S(t)-S_\infty]^*x'\rangle dt\\
&=-\langle B^*(I-S_\infty)^*x,B^*(I-S_\infty)^*x' \rangle.
\end{align*}
Moreover, using the fact that $\der{S(t)}{t} = A S(t)=S(t)A$,
\begin{align*}
& \der{t} \langle B^* [S(t)-S_\infty]^*x,B^*[S(t)-S_\infty]^*x'\rangle\\
 &=\langle B^* A^*[S(t)]^*x,B^*[S(t)-S_\infty]^*x'\rangle\\&+\langle B^* [S(t)-S_\infty]^*x,B^*A^*[S(t)]^*x'\rangle.
\end{align*}
Now
\begin{align*}
&\int_0^\infty \langle B^* A^*[S(t)]^*x,B^*[S(t)-S_\infty]^*x'\rangle dt\\ &= \int_0^\infty \langle [S(t) A]^*x,BB^*[S(t)-S_\infty]^*x'\rangle dt\\
&= \int_0^\infty \langle [S(t)-S_\infty] A]^*x,BB^*[S(t)-S_\infty]^*x'\rangle dt\\
&= \int_0^\infty \langle A^* x, [S(t)-S_\infty] BB^*[S(t)-S_\infty]^*x'\rangle dt\\
&=  \left\langle A^* x,\int_0^\infty [S(t)-S_\infty] BB^*[S(t)-S_\infty]^*x'dt\right\rangle \\
&=\langle A^*x, P_\infty x' \rangle
\end{align*}
where we used that Lemma \ref{sinftyconverges} implies that $S_\infty A =0$. Similar computations show that
\begin{align*}
\langle B^* [S(t)-S_\infty]^*x,B^*A^*[S(t)]^*x'\rangle=\langle P_\infty x, A^*x' \rangle.
\end{align*}
Therefore
\begin{align*}
&\langle P_\infty x, A^*x' \rangle+\langle A^*x, P_\infty x' \rangle\\
&=-\langle B^*(I-S_\infty)^*x,B^*(I-S_\infty)^*x' \rangle.
\end{align*}
Since $D(A^*)$ is dense in $X$ this implies
\begin{align*}
AP_\infty x+P_\infty A^*x=-(I-S_\infty)BB^*(I-S_\infty)^*x
\end{align*}
for every $x\in D(A^*)$. To finish the proof, note that the required integrability to justify our formal computations follows from
\begin{align*}
&\left| \der{t} \langle B^* [S(t)-S_\infty]^*x,[S(t)-S_\infty]^*x'\rangle \right|\\ &\leq \left | \langle B^* A^*[S(t)]^*x,B^*[S(t)-S_\infty]^*x'\rangle \right |\\
&+\left |\langle B^* [S(t)-S_\infty]^*x,B^*A^*[S(t)]^*x'\rangle \right |\\
&\leq \|A^*x\| \|x'\| \|B^*\|^2 L^2 e^{-2\mu t}\\
&+\|A^*x'\| \|x\| \|B^*\|^2 L^2 e^{-2\mu t}
\end{align*}
where we used our characterization of semistability in Theorem \ref{semichars} to obtain a uniform bound, pointwise in time on $S(t)-S_\infty$.
\end{proof}

Unfortunately, the semistability of the system, the fact that $A$ might have a kernel, implies the possibility for non-uniqueness of the Lyapunov equation above. The following two results try to specify exactly which solution of the Lyapunov equation we are interested in. 

\begin{lemma}
\label{lyupchars}
Assume that $P_1$ is a self-adjoint solution of the semistability Lyapunov equation
\begin{align*}
&\langle P_1x, A^*x' \rangle+\langle A^*x, P_1 x' \rangle\\&=-\langle B^*(I-S_\infty)^*x,B^*(I-S_\infty)^*x' \rangle
\end{align*}
where $A$ is the infinitesimal generator for an exponentially semistable $C_0$-semigroup on a separable Hilbert space, $X$, and suppose $x,x' \in D(A^*)$. If $P_2$ is another self-adjoint operator, then each of the statements below implies the next. If $A$ in addition is self-adjoint, all the statements are equivalent.
\begin{enumerate}
\item $P_2$ satisfies the semistability Lyapunov equation.
\item $\Delta= P_2-P_1$ satisfies for each $x,x' \in D(A^*)$
\begin{align*}
\langle S_\infty x, \Delta S_\infty x' \rangle = \langle x , \Delta x' \rangle.
\end{align*}
\item There exists an operator $\Pi :X \to X$ that  $\Pi$ maps onto a subspace $W$ of $\ker A^*$  such that the solutions satisfy the relation $P_2=P_1+\Pi$.
\end{enumerate}
\end{lemma}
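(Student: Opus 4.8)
The plan is to reduce the classification of solutions to an analysis of the \emph{homogeneous} semistability Lyapunov equation and then to exploit the exponential decay of $S(t)-S_\infty$. For (1)$\Rightarrow$(2) I would first subtract the two Lyapunov equations to see that $\Delta := P_2 - P_1$ is self-adjoint and satisfies $\langle \Delta x, A^* x'\rangle + \langle A^* x, \Delta x'\rangle = 0$ for all $x,x'\in D(A^*)$. The heart of the argument is then to show that, for fixed $x,x'\in D(A^*)$, the scalar function $g(t) := \langle \Delta S(t)^* x, S(t)^* x'\rangle$ is constant in $t$: since $S(t)^*$ is the $C_0$-semigroup generated by $A^*$ it leaves $D(A^*)$ invariant and commutes with $A^*$, so differentiating under the inner product and using $\Delta=\Delta^*$ turns $g'(t)$ into exactly the left-hand side of the homogeneous equation evaluated at the pair $S(t)^*x,\,S(t)^*x'$, which vanishes. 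Evaluating $g$ at $t=0$ and letting $t\to\infty$ --- the limit being justified because taking adjoints in Theorem \ref{semichars} gives $\|S(t)^*-S_\infty^*\|\le Le^{-\mu t}$, so $S(t)^*\to S_\infty^*$ --- then yields $\langle \Delta x, x'\rangle = \langle \Delta S_\infty^* x, S_\infty^* x'\rangle$, which is the identity in (2) after regrouping via $\Delta=\Delta^*$ and idempotency of $S_\infty$; when $A$ is self-adjoint, $S_\infty=S_\infty^*$ is the orthogonal projection onto $\ker A$ by the Corollary, so the identity holds verbatim.

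For (2)$\Rightarrow$(3), the identity in (2) exhibits $\Delta$ as factoring through the limiting operator of a semistable semigroup, so $\ran \Delta$ lies inside the range of such an operator, which by Lemma \ref{sinftyconverges} (applied to $S(t)$, respectively to the adjoint semigroup generated by $A^*$) is contained in $\ker A$, respectively $\ker A^*$. Thus $\Pi := \Delta = P_2 - P_1$ together with $W := \overline{\ran \Pi}$ is the required pair, and $P_2 = P_1+\Pi$ is automatic.

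For the equivalences in the self-adjoint case I would close the loop. For (3)$\Rightarrow$(1): if $\ran \Pi\subseteq \ker A$ then for $x,x'\in D(A^*)=D(A)$ the vectors $\Pi x,\Pi x'$ lie in $\ker A\subseteq D(A)$ and are annihilated by $A$, so $\langle \Pi x, A^* x'\rangle = \langle A\Pi x, x'\rangle = 0$ and likewise $\langle A^* x, \Pi x'\rangle = 0$; hence $\Pi$ solves the homogeneous equation and $P_2 = P_1+\Pi$ solves the Lyapunov equation. For (2)$\Rightarrow$(1) one runs the computation above in reverse, now legitimately because $A = A^*$ lets the conserved-quantity argument be carried out with $S(t)$ in place of $S(t)^*$. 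I expect the main obstacle to be this first step: the conserved-quantity computation carries a fair amount of domain bookkeeping (differentiation under the inner product, invariance of $D(A^*)$, density to pass to all of $X$, interchange of limit and inner product at $t\to\infty$), and it is precisely the asymmetry between $A$ and $A^*$ --- equivalently between $S_\infty$ and $S_\infty^*$ --- that blocks the implications from reversing in general and makes self-adjointness of $A$ essential for the full equivalence.
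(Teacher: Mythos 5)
Your overall strategy is the same as the paper's: subtract the two Lyapunov equations to obtain a homogeneous equation for $\Delta$, exhibit a quantity conserved along the semigroup, pass to the limit $t\to\infty$ to get the invariance identity of item 2, take $\Pi=\Delta$ and read off from that identity that the range of $\Delta$ lies in the relevant kernel, and close the loop in the self-adjoint case by checking that any operator with range in $\ker A$ solves the homogeneous equation. Your steps $2\Rightarrow 3$ and $3\Rightarrow 1$ match the paper's almost verbatim. The one substantive divergence is in $1\Rightarrow 2$: you run the conserved quantity along the adjoint semigroup, $g(t)=\langle \Delta S(t)^{*}x,\,S(t)^{*}x'\rangle$, which is the choice forced by the $A^{*}$'s in the homogeneous equation and is therefore the more defensible computation; the paper instead differentiates $\langle S(t)x_{0},\Delta S(t)x_{0}'\rangle$, which tacitly requires rewriting the homogeneous equation with $A$ in place of $A^{*}$ (legitimate only when $A=A^{*}$). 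The price you pay is that your limit reads $\langle \Delta S_\infty^{*}x,S_\infty^{*}x'\rangle=\langle \Delta x,x'\rangle$, i.e.\ $\Delta=S_\infty\Delta S_\infty^{*}$, whereas item 2 asserts $\Delta=S_\infty^{*}\Delta S_\infty$; no amount of ``regrouping via $\Delta=\Delta^{*}$ and idempotency'' converts one into the other, since that would require $S_\infty=S_\infty^{*}$. Correspondingly your version of item 3 places $\ran\Pi$ in $\ker A$ rather than $\ker A^{*}$. You flag this honestly, and the discrepancy vanishes exactly in the self-adjoint case where the full equivalence is claimed; but be aware that in the general case you have proved the adjoint-flipped versions of items 2 and 3, while the paper's proof of the literal statements rests on the unjustified swap of $A^{*}$ for $A$ --- the two arguments carry mirror images of the same defect. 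Everything else in your outline (invariance of $D(A^{*})$ under $S(t)^{*}$, the operator-norm convergence $S(t)^{*}\to S_\infty^{*}$ obtained by taking adjoints in the semistability estimate, density of $D(A^{*})$, and the concrete $3\Rightarrow 1$ computation) is sound and coincides with what the paper does.
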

\begin{proof} We first show that $1 \Rightarrow 2$.
Let $P_2$ be another self-adjoint solution of the Lyapunov equation and consider $\Delta = P_1-P_2$. For $x,x' \in D(A^*)$. It is not hard to see that
\begin{align*}
\langle x, \Delta A x'\rangle + \langle A x, \Delta x' \rangle =0.
\end{align*}
If we let $x= S(t)x_0, x'=S(t)x'$, this can be rewritten as
\begin{align*}
0&=\langle S(t) x_0, \Delta A S(t)x_0'\rangle + \langle A S(t)x_0, \Delta S(t)x_0' \rangle\\
&=\langle S(t) x_0, \Delta \der{t} S(t)x_0'\rangle + \langle \der{t} S(t)x_0, \Delta S(t)x_0' \rangle\\
&=\der{t} \langle S(t)x_0, \Delta S(t)x_0'\rangle.
\end{align*}
Integrating this equation from $0$ to $\infty$ we obtain
\begin{align*}
\langle S_\infty x_0, \Delta S_\infty x_0' \rangle = \langle x_0 , \Delta x_0' \rangle.
\end{align*}

Now $2 \Rightarrow 3$. To see this, we take $\Pi=\Delta$, since 
\begin{align*}
&\langle S_\infty x, \Delta S_\infty x' \rangle = \langle x , \Delta x' \rangle\\
\Leftrightarrow & \langle S_\infty^* \Delta S_\infty x,  x' \rangle = \langle \Delta x ,  x' \rangle
\end{align*}
and since $x,x' \in D(A^*)$ where $D(A^*)$ is dense in $X$, we indeed have for any $\bar x \in X$
\begin{align*}
\Delta \bar x= S_\infty^* \Delta S_\infty \bar x =S_\infty^* (\Delta S_\infty \bar x) \in \ker A^*.
\end{align*}

Finally, $3\Rightarrow 1$ in the case $A$ is self-adjoint. This follows since by construction of $\Pi$ we have geometrically that $\Pi$ maps to the kernel of $A^*$, so $A\Pi= A^*\Pi=0$ since $A$ is self-adjoint. But then also $0=(A\Pi)^*=\Pi^*A^*$. Thus
\begin{align*}
&\langle P_2x, A^*x' \rangle+\langle A^*x, P_2 x' \rangle\\&=\langle (P_1+  \Pi) x, A^*x' \rangle+\langle A^*x, (P_1+ \Pi) x' \rangle\\
&=\langle I x,(P_1+  \Pi)^* A^*x' \rangle+\langle (P_1+  \Pi)^* A^*x, Ix' \rangle\\
&=\langle I x,(P_1)^* A^*x' \rangle+\langle (P_1)^* A^*x, Ix' \rangle\\
&=\langle P_1x, A^*x' \rangle+\langle A^*x, P_1 x' \rangle\\
&=-\langle B^*(I-S_\infty)^*x,B^*(I-S_\infty)^*x' \rangle
\end{align*}
by direct computation.
\end{proof}

Using this lemma, we can explicitly compute the semistability Gramian without reference to the semigroup whenever the generator is self-adjoint and thus $S_\infty =\pi_{\ker A}$ by Corollary 3.3. We show this below.

\begin{theorem}
\label{theexplicittheorem}
Let $A$ be the self-adjoint exponentially semistable generator of a $C_0$-semigroup $S(t)$ on a separable Hilbert space, $X$, and let $B$ be bounded. Suppose further that $P$ is an arbitrary solution to the semistability Lyapunov equation
\begin{align*}
&\langle Px, A^*x' \rangle+\langle A^*x, P x' \rangle\\
&=-\langle B^*(I-S_\infty)^*x,B^*(I-S_\infty)^*x' \rangle
\end{align*}
then the semistability Gramian can be computed as
\begin{align*}
P_\infty= P-S_\infty P.
\end{align*}
In particular, $P_\infty$ is the unique solution to the semistability Lyapunov equation satisfying the constraint
\begin{align*}
P_\infty= (I-S_\infty)P_\infty.
\end{align*}
\end{theorem}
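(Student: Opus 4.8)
The plan is to first verify that $P_\infty$ itself is one solution meeting the constraint, and then to pin down an arbitrary solution relative to it using Lemma \ref{lyupchars}.

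First I would record the two structural facts about $S_\infty$ available when $A=A^*$: by Corollary 3.3 it is the orthogonal projection $\pi_{\ker A}$, so $S_\infty=S_\infty^*=S_\infty^2$; and from the semigroup law together with exponential semistability, $S_\infty S(t)=\lim_{s\to\infty}S(s+t)=S_\infty$ for every $t\geq 0$, hence $S_\infty\big(S(t)-S_\infty\big)=S_\infty-S_\infty^2=0$. Since $S_\infty$ is bounded it may be pulled inside the Pettis integral defining $P_\infty$ (a one-line weak-form check), so $S_\infty P_\infty=0$, equivalently $(I-S_\infty)P_\infty=P_\infty$. Combined with Theorem \ref{semiLyapunov}, this exhibits $P_\infty$ as a solution of the semistability Lyapunov equation satisfying the advertised constraint.

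Next, let $P$ be any solution --- which, in keeping with Lemma \ref{lyupchars}, I take to be self-adjoint --- and set $\Delta:=P-P_\infty$. Both $P$ and $P_\infty$ solve the Lyapunov equation, so $\Delta$ is a self-adjoint solution of the homogeneous one, and the implication $1\Rightarrow 2$ of Lemma \ref{lyupchars} (with $P_1=P_\infty$, $P_2=P$) gives $\langle S_\infty x,\Delta S_\infty x'\rangle=\langle x,\Delta x'\rangle$ for $x,x'\in D(A^*)$, hence by density $S_\infty\Delta S_\infty=\Delta$ (using $S_\infty^*=S_\infty$). Consequently $(I-S_\infty)\Delta=(I-S_\infty)S_\infty\Delta S_\infty=(S_\infty-S_\infty^2)\Delta S_\infty=0$, so $P-S_\infty P=(I-S_\infty)P=(I-S_\infty)(P_\infty+\Delta)=(I-S_\infty)P_\infty=P_\infty$, which is the claimed formula. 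For uniqueness, if $Q$ is any solution with $(I-S_\infty)Q=Q$, i.e. $S_\infty Q=0$, then applying the formula just derived with $P$ replaced by $Q$ yields $P_\infty=Q-S_\infty Q=Q$; together with $(I-S_\infty)P_\infty=P_\infty$ this identifies $P_\infty$ as the unique solution subject to the constraint.

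I expect the only real obstacle to be bookkeeping rather than depth: justifying the interchange of $S_\infty$ with the Pettis integral, and the invocation of Lemma \ref{lyupchars}, whose proof of $1\Rightarrow 2$ rests on the $\tfrac{d}{dt}$-trick and on the self-adjointness of $\Delta$, matching the standing hypothesis on $P$. A longer but more transparent route, which I would mention as a remark, is to split $X=\ker A\oplus(\ker A)^\perp$, on which $A=0\oplus A_1$ with $A_1$ an invertible self-adjoint generator of an exponentially stable semigroup and $B=(B_0,B_1)$; then the Lyapunov equation forces the block form of every solution and reduces the statement to the classical uniqueness of the controllability Gramian for $(A_1,B_1)$ on $(\ker A)^\perp$.
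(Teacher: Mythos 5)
Your proof is correct and follows essentially the same route as the paper: establish $S_\infty S(t)=S_\infty$ and hence $S_\infty P_\infty=0$, then invoke Lemma \ref{lyupchars} to compare an arbitrary (self-adjoint) solution with $P_\infty$. The only differences are cosmetic --- you use characterization~2 of the lemma ($S_\infty\Delta S_\infty=\Delta$) where the paper uses characterization~3 ($\Delta$ maps into $\ker A^*$), and you spell out the uniqueness step that the paper leaves implicit.
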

\begin{proof}
Observe that
\begin{align*}
S_\infty S(t)= \lim_{s\to \infty} S(s) S(t)= \lim_{s\to \infty} S(t+s)=S_\infty
\end{align*}
and by Lemma \ref{sinftyconverges} we already have $S_\infty^2=S_\infty$. This implies that
\begin{align*}
S_\infty [S(t)-S_\infty]=0.
\end{align*}
which in turn implies that
\begin{align*}
S_\infty P_\infty=0.
\end{align*}
If $P$ is any other solution to the semistability Lyapunov equation, substituting the third characterization of Lemma \ref{lyupchars} yields
\begin{align*}
S_\infty (P+\Pi)&=0, \textnormal { or}\\
S_\infty \Pi &= -S_\infty P, \textnormal{ so that} 
\Pi = -S_\infty P.
\end{align*}
In the final step we used that $S_\infty$ acts identically and is  idempotent on $\im \Pi\subseteq\ker A^*= \ker A$.
\end{proof}

\section{Model Reduction and Error Estimates}

As mentioned before, the ultimate aim of our study of the semistability Gramian is to derive an error formula for model reduction. Our application here is to a class of model reductions which roughly speaking correspond to mode truncation of the generator $A$.

\begin{definition}
\label{invariantmodelreductiondefinition}
An invariant model reduction of $\Sigma$ onto $V$ is a triple $(\pi,\sigma , \hat A)$ where $\pi : X \to V$ is a bounded surjective operator, $\sigma : V \to X$ is a bounded operator and $\hat A$ satisfies $\hat A \pi x = \pi A x$ for all $x \in D(A)$. The reduced input and output operators are given by $\hat B = \pi B$ and $\hat C=C\sigma$.
\end{definition}

The full power of the commutativity assumption is brought to life by the following theorem, found originally in \cite{atay2017lumpability} for the infinite-dimensional case. 

\begin{theorem}[\cite{atay2017lumpability}]
\label{unoriginaltheorem}
Suppose that $\pi : X \to V$ is a bounded linear map and that $A$ is the infinitesimal generator of a $C_0$-semigroup $S(t)$. Then the following are equivalent:
\begin{enumerate}
\item $\ker \pi$ is $S(t)$-invariant for each $t\geq 0$.
\item There exists $\hat A: \pi (D(A)) \to V$ generating a $C_0$-semigroup $\hat S(t)$ on $V$ with $\pi A= \hat A \pi$ on $D(A)$ and in this case $\pi S(t) = \hat S(t) \pi$ for each $t\geq 0$ on $X$.
\end{enumerate}
\end{theorem}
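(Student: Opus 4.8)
The plan is to prove the two implications separately, with almost all the effort going into $(1)\Rightarrow(2)$. The implication $(2)\Rightarrow(1)$ I would dispose of immediately: if $\pi S(t)=\hat S(t)\pi$ and $x\in\ker\pi$, then $\pi S(t)x=\hat S(t)\pi x=0$, so $S(t)x\in\ker\pi$. For $(1)\Rightarrow(2)$ the organizing observation is that $\hat S(t)$ has no freedom — the required relation $\hat S(t)\pi x=\pi S(t)x$, together with surjectivity of $\pi$, is already a \emph{definition} of a candidate operator $\hat S(t)\colon V\to V$, provided it is well posed. So the steps are: (a) show the candidate is well defined; (b) show it is a $C_0$-semigroup on $V$; (c) identify its generator as an operator on $\pi(D(A))$ satisfying $\hat A\pi=\pi A$.

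For (a): if $\pi x=\pi x'$ then $x-x'\in\ker\pi$, which is closed because $\pi$ is bounded and $S(t)$-invariant by hypothesis, so $\pi S(t)x=\pi S(t)x'$; also $\hat S(t)$ maps into $\ran\pi=V$. For (b): $\hat S(0)=I$ and $\hat S(t)\hat S(s)=\hat S(t+s)$ follow by composing the corresponding identities for $S(t)$ with $\pi$, and strong continuity at $0$ follows from $\|\hat S(t)\pi x-\pi x\|=\|\pi(S(t)x-x)\|\le\|\pi\|\,\|S(t)x-x\|\to 0$. The one nontrivial ingredient is the exponential bound, and here I would invoke the open mapping theorem: since $\pi$ is bounded and surjective between Banach spaces it induces a topological isomorphism $X/\ker\pi\cong V$, so there is $c>0$ such that every $v\in V$ has a preimage $x$ with $\|x\|\le c\|v\|$; then $\|S(t)\|\le Me^{\omega t}$ gives $\|\hat S(t)v\|=\|\pi S(t)x\|\le c\|\pi\|Me^{\omega t}\|v\|$. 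Thus $\hat S(t)$ is a $C_0$-semigroup with growth bound at most $\omega$, and $\pi S(t)=\hat S(t)\pi$ holds by construction.

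For (c), one inclusion is direct: for $x\in D(A)$,
\begin{align*}
\tfrac{1}{t}\big(\hat S(t)\pi x-\pi x\big)=\pi\big(\tfrac{1}{t}(S(t)x-x)\big)\longrightarrow \pi A x\qquad(t\to 0^+),
\end{align*}
so $\pi x\in D(\hat A)$ with $\hat A\pi x=\pi A x$, and this is consistent (independent of the representative $x$) because $\hat S(t)$ is well defined; hence $\pi(D(A))\subseteq D(\hat A)$. The converse inclusion $D(\hat A)\subseteq\pi(D(A))$ is the delicate point, and I would get it from resolvents. For real $\lambda>\omega$ both resolvents exist and are Laplace transforms, $R(\lambda,A)=\int_0^\infty e^{-\lambda t}S(t)\,dt$ and $R(\lambda,\hat A)=\int_0^\infty e^{-\lambda t}\hat S(t)\,dt$; pulling the bounded operator $\pi$ through the (Pettis) integral and using $\pi S(t)=\hat S(t)\pi$ yields $\pi R(\lambda,A)=R(\lambda,\hat A)\pi$. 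Then for $v\in D(\hat A)$ I set $w=(\lambda I-\hat A)v$, choose $x$ with $\pi x=w$, and conclude $v=R(\lambda,\hat A)w=R(\lambda,\hat A)\pi x=\pi R(\lambda,A)x\in\pi(D(A))$. This gives $D(\hat A)=\pi(D(A))$, and $\hat A\pi=\pi A$ on $D(A)$ then determines $\hat A$ completely.

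The main obstacle I anticipate is exactly this last inclusion — ruling out that the factored generator picks up a domain strictly larger than $\pi(D(A))$; the difference-quotient computation only supplies one inclusion, and the resolvent/Laplace-transform detour (legitimate precisely because both semigroups share the growth bound $\omega$) is the cleanest way to close the gap. The only other point needing care is the appeal to the open mapping theorem to bound preimages uniformly, without which the exponential estimate on $\hat S(t)$, and hence the $C_0$ property, would be unavailable.
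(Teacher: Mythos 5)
Your proof is correct, but note that the paper does not actually prove this statement --- it is imported verbatim from \cite{atay2017lumpability} and used as a black box, so there is no in-paper argument to compare against. Your construction is the standard one for induced (quotient) semigroups: well-definedness of $\hat S(t)v:=\pi S(t)x$ from $S(t)$-invariance of the closed subspace $\ker\pi$, the open-mapping-theorem bound on preimages to transfer the exponential estimate, and the resolvent intertwining $\pi R(\lambda,A)=R(\lambda,\hat A)\pi$ to pin down $D(\hat A)=\pi(D(A))$ rather than merely $\pi(D(A))\subseteq D(\hat A)$ --- this last step is indeed the point most expositions gloss over, and you handle it properly. Two small caveats: you tacitly assume $\pi$ is surjective (the theorem as stated says only ``bounded linear map'', though surjectivity is built into the paper's Definition~\ref{invariantmodelreductiondefinition} and into the source it cites, so this is a fair reading); and your $(2)\Rightarrow(1)$ uses the semigroup intertwining $\pi S(t)=\hat S(t)\pi$ as part of the hypothesis, which is legitimate given the ``and in this case'' phrasing, but if one reads statement~2 as asserting only $\pi A=\hat A\pi$ you would first need to derive the semigroup relation, e.g.\ by a uniqueness argument for the abstract Cauchy problem on $V$.
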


To guide our intuition, note that when $A$ admits an orthogonal eigenvalue-eigenvector decomposition, the (closed) $S(t)$-invariant subspaces of $X$ are linear combinations of its eigenvectors. The precise statement can be found as Lemma 2.5.8 in \cite{curtain2012introduction} which shows that this reasoning remains valid for the class of Riesz spectral operators, a class of unbounded operators admitting an SVD-like decomposition.

The first application of the above theorem to model reduction is that the original system's stability properties are preserved under the class of reductions considered here.

\begin{proposition}
If $A$ is semistable on the Hilbert space $X$ and $(\pi,\hat A)$ is an invariant linear model reduction onto $V$ then $\hat A$ is semistable on $V$. If $A$ in addition is stable, then so is $\hat A$.
\end{proposition}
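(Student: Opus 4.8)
The plan is to transfer the semistability estimate for $S(t)$ through the intertwining relation $\pi S(t) = \hat S(t) \pi$ supplied by Theorem \ref{unoriginaltheorem}, which applies because an invariant model reduction has $\ker\pi$ that is $S(t)$-invariant. First I would fix $v \in V$ and, using surjectivity of $\pi$, pick $x \in X$ with $\pi x = v$. Semistability of $A$ gives an equilibrium $x_e \in \ker A$ and constants $M,\mu>0$ with $\|S(t)x - x_e\| \le Me^{-\mu t}\|x - x_e\|$. The natural candidate for the equilibrium of $v$ is $v_e := \pi x_e$. One checks $v_e \in \ker \hat A$: indeed $\hat A v_e = \hat A \pi x_e = \pi A x_e = \pi 0 = 0$, using the commutativity $\hat A \pi = \pi A$ on $D(A)$ (and $x_e \in \ker A \subseteq D(A)$).

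Next I would estimate, using $\hat S(t)\pi = \pi S(t)$ and boundedness of $\pi$,
\begin{align*}
\|\hat S(t) v - v_e\| &= \|\hat S(t)\pi x - \pi x_e\| = \|\pi(S(t)x - x_e)\| \\
&\le \|\pi\|\, M e^{-\mu t}\|x - x_e\|.
\end{align*}
The one subtlety is that Definition \ref{exponentialsemistability} asks for a bound of the form $\hat M e^{-\mu t}\|v - v_e\|$, i.e. the right-hand side must be controlled by the distance from $v$ to its own equilibrium rather than by $\|x - x_e\|$, which depends on the chosen preimage $x$. To handle this I would instead invoke the characterization in Theorem \ref{semichars}: semistability of $A$ is equivalent to the existence of a bounded idempotent $S_\infty$ and constants $\mu, L>0$ with $\|(S(t)-S_\infty)x\| \le L e^{-\mu t}\|x\|$ for all $x$. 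Then I would define $\hat S_\infty := \pi S_\infty \sigma$ (or simply work with $\hat S_\infty v := \pi S_\infty x$ for any preimage $x$, after checking this is well-defined because $\ker\pi$ is $S_\infty$-invariant, $S_\infty$ being a strong limit of the $S(t)$). One verifies $\hat S_\infty$ maps into $\ker\hat A$ and is idempotent on its range, and then
\begin{align*}
\|(\hat S(t) - \hat S_\infty)v\| &= \|\pi (S(t) - S_\infty)x\| \le \|\pi\|\, L e^{-\mu t}\|x\|.
\end{align*}
Choosing $x$ to be, say, the minimal-norm preimage (or any fixed bounded right inverse $\sigma_0$ of $\pi$ applied to $v$) bounds $\|x\|$ by a constant times $\|v\|$, giving $\|(\hat S(t)-\hat S_\infty)v\| \le \hat L e^{-\mu t}\|v\|$; the reverse direction of Theorem \ref{semichars} then yields exponential semistability of $\hat A$ on $V$.

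The main obstacle is the well-definedness and idempotency bookkeeping for $\hat S_\infty$: one must confirm that $S_\infty$ preserves $\ker\pi$ so that the induced map on $V$ makes sense, that its image lands in $\ker\hat A$, and that it is idempotent on that image — all of which follow from $\hat A\pi = \pi A$, $S_\infty A = 0$ (Lemma \ref{sinftyconverges}), and $S_\infty^2 = S_\infty$, but need to be spelled out. For the stability addendum, the argument is strictly easier: if $A$ is (exponentially) stable then $\ker A = \{0\}$, hence $S_\infty = 0$, hence $\hat S_\infty = 0$, and the same estimate gives $\|\hat S(t)v\| \le \hat L e^{-\mu t}\|v\|$, i.e. $\hat A$ is exponentially stable on $V$.
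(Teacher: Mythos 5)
Your proposal is correct and follows essentially the same route as the paper: push the estimate through the intertwining relation $\hat S(t)\pi = \pi S(t)$, take $v_e = \pi x_e$ as the equilibrium, invoke the characterization of Theorem \ref{semichars} to get a bound in terms of $\|x\|$ rather than $\|x-x_e\|$, and control $\|x\|$ by $\|v\|$ via a bounded right inverse of $\pi$ (the paper uses $\pi_{|(\ker\pi)^\perp}^{-1}$ from the Open Mapping Theorem, which is your minimal-norm preimage). If anything, you are more explicit than the paper about the well-definedness and idempotency of the induced $\hat S_\infty$, details the paper leaves implicit.
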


\begin{proof}
Let $v \in V$. First, observe that any $v\in V$ can  be written $v=\pi x = \pi_| x$ for $x=\pi_|^{-1}v \in X$ since the map $\pi$ restricts to a bounded linear operator with bounded inverse $\pi_|:=\pi_{|(\ker \pi)^\perp}: (\ker \pi)^\perp \to V$ which we obtain by an application of the Open Mapping Theorem. Denote the equilibrium point of $x$ by $x_e$, which exists by semistability of $S$. Then using the second characterization of semistability in Theorem \ref{semichars}
\begin{align*}
\|\hat S(t) v-\pi x_e\| &=\| \hat S(t) \pi x - \pi x_e\|=\|\pi S(t) x- \pi x_e \| \\
&\leq \|\pi\| \|S(t)x-\pi x_e\|\leq |\pi\| L e^{-\mu t } \| x\|\\
& =  |\pi\| L e^{-\mu t } \| \pi_|^{-1}v\| \leq  \frac{\|\pi\|}{\|\pi_|^{-1}\|} L e^{-\mu t } \| v\| .
\end{align*}
The desired equilibrium point is thus given by $\pi x_e$.

Now, if $A$ is stable, the only equilibrium point is $x_e=0$ and so the bound above reduces to exponential stability.
\end{proof}

We will now see that also approximate controllability is preserved.

\begin{proposition}
Let $\Sigma(A,B,-)$ be an approximately controllable control system on the Hilbert space $X$ and $(\pi,\hat A)$ be an invariant linear model reduction onto $V$. Then the reduced model $\Sigma(\hat A,\hat B,-)$ is approximately controllable on the reduced space $V$.

\end{proposition}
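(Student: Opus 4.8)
The plan is to identify the reachability space $\hat{\mathcal{R}}$ of the reduced system with the image $\pi\mathcal{R}$ of the reachability space $\mathcal{R}$ of $\Sigma(A,B,-)$, and then to push density through the surjection $\pi$. The only structural input needed is that, since $(\pi,\hat A)$ is an invariant model reduction, we have $\hat A\pi = \pi A$ on $D(A)$, so Theorem \ref{unoriginaltheorem} furnishes the intertwining relation $\pi S(t) = \hat S(t)\pi$ on all of $X$ for every $t\ge 0$; together with $\hat B = \pi B$ this is everything we use.

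First I would take an arbitrary state reachable for the reduced model, i.e. $\hat x = \int_0^t \hat S(t-s)\hat B u(s)\,ds$ for some $t\ge 0$ and some control $u$, and compute
\begin{align*}
\hat x = \int_0^t \hat S(t-s)\pi B u(s)\,ds = \int_0^t \pi S(t-s) B u(s)\,ds = \pi \int_0^t S(t-s) B u(s)\,ds,
\end{align*}
where the last equality moves the bounded operator $\pi$ through the Pettis integral, a standard property (see \cite{diestel1977vector}). Hence $\hat x \in \pi\mathcal{R}$; reading the same chain of equalities backwards gives the reverse inclusion, so that $\hat{\mathcal{R}} = \pi\mathcal{R}$. (For the conclusion one only needs $\hat{\mathcal{R}}\supseteq\pi\mathcal{R}$, but equality is just as cheap.)

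Finally I would conclude by a purely topological argument: $\pi$ is bounded and surjective between Hilbert spaces, hence open by the Open Mapping Theorem, so it maps dense subsets of $X$ onto dense subsets of $V$. Since $\Sigma(A,B,-)$ is approximately controllable, $\mathcal{R}$ is dense in $X$, and therefore $\hat{\mathcal{R}} = \pi\mathcal{R}$ is dense in $\pi X = V$. That is, $\Sigma(\hat A,\hat B,-)$ is approximately controllable on $V$.

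The points requiring care — and hence the (mild) main obstacle — are the justification that $\pi$ commutes with the Pettis integral defining the reachable states, and the verification that the intertwining hypothesis of Theorem \ref{unoriginaltheorem} genuinely applies here, which hinges on $\ker\pi$ being $S(t)$-invariant, equivalently on the commutation $\hat A\pi = \pi A$ that is built into Definition \ref{invariantmodelreductiondefinition}. No new estimates beyond those already established are needed.
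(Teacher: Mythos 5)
Your proof is correct and follows essentially the same route as the paper: both rest on the intertwining relations $\pi S(t)=\hat S(t)\pi$ and $\hat B=\pi B$ together with pulling the bounded operator $\pi$ through the integral, and the paper's density transfer (lifting $v=\pi x$ and pushing an approximating sequence of reachable states through $\pi$) is precisely the content of your final topological step. One cosmetic remark: the Open Mapping Theorem is not needed there, since any continuous surjection already maps dense subsets onto dense subsets.
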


\begin{proof}
Suppose that the reachability subspace of $\Sigma(A,B,-)$ is dense in $X$. Any $x\in X$ can thus be written
\begin{align*}
x=\lim_{n\to \infty} \int_0^{\tau_n} S(\tau-s) B u_n ds
\end{align*}
for $\tau_n>0, u_n \in U$. But for any $v\in V$, the model reduction satisfies for some $x\in X$
\begin{align*}
v=\pi x& = \pi \lim_{n\to \infty}\int_0^{\tau_n} S(\tau-s) B u_n ds\\
&= \lim_{n\to \infty}\int_0^{\tau_n} \pi S(\tau-s) B u_n ds\\
&=  \lim_{n\to \infty}\int_0^{\tau_n} \hat S(\tau-s)\pi B u_n ds\\
&=\lim_{n\to \infty}\int_0^{\tau_n} \hat S(\tau-s)\hat B u_n ds.
\end{align*}
We conclude: for every $v\in V$ there is a sequence of elements in the reachability subspace of $\Sigma(\hat A, \hat B, -)$ that converge to $v$, i.e., the reachability subspace for the reduced model is also dense. The interchanges of the limit and integral with $\pi$ are justified by that first, $\pi$ is bounded, and second by that the integrands are bounded operators.
\end{proof}

Although we have focused on approximate controllability in this exposition, it is not hard to see that analogous results can be derived for approximate observability by the natural duality.

As a first step toward our norm guarantees, the following result gives  trajectory-wise proximity of the reduced system to the original system.

\begin{proposition}
Let $\Sigma(A,-,-)$ be an exponentially \\ semistable system and suppose that $(\pi,\sigma,\hat A)$ is an invariant model reduction of this system with $\sigma \pi$ restricting to the identity on $\ker A$. Then for all initial conditions, $x\in X$ we have that $\|S(t)x-\sigma \hat S (t)\pi x\| \to 0$.

\end{proposition}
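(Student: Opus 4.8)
The plan is to reduce the claimed convergence to the exponential decay of the ``stable part'' of the dynamics. First I would invoke Theorem~\ref{unoriginaltheorem}: since $(\pi,\sigma,\hat A)$ is an invariant model reduction, $\hat A$ generates the $C_0$-semigroup $\hat S(t)$ on $V$ and satisfies $\hat A\pi = \pi A$ on $D(A)$, so the intertwining relation $\pi S(t) = \hat S(t)\pi$ holds on all of $X$ for every $t\geq 0$. Consequently $\sigma\hat S(t)\pi x = \sigma\pi S(t)x$, and the quantity to be estimated is $S(t)x - \sigma\pi S(t)x = (I-\sigma\pi)S(t)x$, where $\sigma\pi\in\mathfrak{B}(X)$ because both $\sigma$ and $\pi$ are bounded.

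Next I would use semistability of $S(t)$: for the given $x\in X$ there is an equilibrium $x_e\in\ker A$ and constants $M,\mu>0$ (a priori depending on $x$) with $\|S(t)x - x_e\|\leq Me^{-\mu t}\|x - x_e\|$, equivalently the bound furnished by Theorem~\ref{semichars}. The key algebraic observation is that, because $x_e\in\ker A$ and $\sigma\pi$ restricts to the identity on $\ker A$ by hypothesis, we have $\sigma\pi x_e = x_e$. Writing $S(t)x = x_e + (S(t)x - x_e)$ therefore gives
\begin{align*}
S(t)x - \sigma\hat S(t)\pi x = (I - \sigma\pi)S(t)x = (I-\sigma\pi)x_e + (I-\sigma\pi)\bigl(S(t)x - x_e\bigr) = (I-\sigma\pi)\bigl(S(t)x - x_e\bigr).
\end{align*}

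Finally I would bound the norm: $\|S(t)x - \sigma\hat S(t)\pi x\| \leq \|I-\sigma\pi\|\,\|S(t)x - x_e\| \leq \|I-\sigma\pi\|\, M e^{-\mu t}\|x - x_e\| \to 0$ as $t\to\infty$, which is the claim; in fact this shows the convergence is exponential (with rate $\mu$, up to the $x$-dependent constant).

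I do not expect a genuine obstacle here. The only points requiring care are the clean application of Theorem~\ref{unoriginaltheorem} to secure $\pi S(t) = \hat S(t)\pi$ on the whole of $X$, the boundedness of $\sigma\pi$ needed to pull it through the estimate, and the recognition that the hypothesis $\sigma\pi|_{\ker A} = \mathrm{id}$ is precisely what makes the slowly-varying component $x_e$ of the trajectory cancel, leaving only the exponentially decaying remainder $(I-\sigma\pi)(S(t)x - x_e)$.
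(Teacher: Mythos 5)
Your proposal is correct and follows essentially the same route as the paper's proof: both use the intertwining relation $\sigma\hat S(t)\pi x = \sigma\pi S(t)x$, insert the equilibrium $x_e$ with $\sigma\pi x_e = x_e$ to reduce the error to $(I-\sigma\pi)(S(t)x - x_e)$, and bound it by $\|I-\sigma\pi\|Me^{-\mu t}\|x-x_e\|$. No substantive differences to report.
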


\begin{proof}
By commutativity $\hat S(t) \pi x = \pi S(t) x$. So we may write for any $x\in X$ with equilibrium point $x_e\in \ker A$
\begin{align*}
\|S(t)x-\sigma \pi S(t)x \|&=\|(S(t)x-x_e)-(\sigma\pi S(t) x-x_e)\| \\
&=\|(S(t)x-x_e)-(\sigma\pi S(t) x-\sigma\pi x_e)\| \\
&\leq \|I-\sigma \pi \| M e^{-\mu t} \|x-x_e\|
\end{align*}
proving the result.
\end{proof}

The synchronization result above guides our intuition for the hypotheses necessary for the main result on the $\mathcal{H}_2$-norm error, which we state immediately below.

\begin{theorem} 
\label{mytheorem}
Suppose that $\Sigma(A,B,C)$ is a distributed parameter system on a separable Hilbert space $X$ where  $A$ generates a semistable $C_0$-semigroup $S(t)$ that $B$ and $C$ are bounded and that $(\sigma, \pi, \hat A)$ is an invariant model reduction thereof where $\sigma \pi$ restricts to the identity on $\ker A$. Then if $(I-\sigma\pi)S(t)$ is Hilbert-Schmidt the model error is given by
\begin{align*}
\|\Sigma-\hat \Sigma\|_{\mathcal{H}_2}=\sqrt{\tr\Big(C(I-\sigma\pi) P_\infty(I-\sigma \pi)^*C^* \Big)}
\end{align*}
where $P_0$ is the semistability Gramian of $\Sigma$ which for $x\in D(A^*)$ satisfies
\begin{align*}
APx+PA^*x=-(I-S_\infty)BB^*(I-S_\infty)^*x.
\end{align*}
\end{theorem}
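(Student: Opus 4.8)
The plan is to compute the $\mathcal{H}_2$-norm of the difference system directly from its impulse response and then recognize the resulting time-integral as the trace of the Gramian expression. First I would write down the impulse response of $\Sigma - \hat\Sigma$. Since both systems are driven by the same input $u$, the error output is $y - \hat y = C S(t) B u - \hat C \hat S(t) \hat B u = \big(C S(t) B - C\sigma \hat S(t) \pi B\big) u$, so the error impulse response is $h_e(t) = C S(t) B - C \sigma \hat S(t) \pi B$. Using commutativity (Theorem \ref{unoriginaltheorem}), $\hat S(t)\pi = \pi S(t)$, hence $h_e(t) = C S(t) B - C\sigma \pi S(t) B = C(I - \sigma\pi) S(t) B$. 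This is the key simplification that makes everything go through.

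Next I would insert a copy of $S_\infty$: because $\sigma\pi$ restricts to the identity on $\ker A$ and $S_\infty$ maps into $\ker A$, we have $(I-\sigma\pi)S_\infty = S_\infty - \sigma\pi S_\infty = 0$, so $h_e(t) = C(I-\sigma\pi)(S(t)-S_\infty)B$. Then
\begin{align*}
\|\Sigma - \hat\Sigma\|_{\mathcal{H}_2}^2 &= \int_0^\infty \tr\big(h_e(t) h_e^*(t)\big)\,dt\\
&= \int_0^\infty \tr\Big(C(I-\sigma\pi)(S(t)-S_\infty)BB^*(S(t)-S_\infty)^*(I-\sigma\pi)^*C^*\Big)\,dt.
\end{align*}
Now I would pull the $t$-independent operators $C(I-\sigma\pi)$ and its adjoint outside the integral and interchange the trace with the (Pettis) integral, giving
\begin{align*}
\|\Sigma - \hat\Sigma\|_{\mathcal{H}_2}^2 = \tr\Big(C(I-\sigma\pi)\Big[\int_0^\infty (S(t)-S_\infty)BB^*(S(t)-S_\infty)^*\,dt\Big](I-\sigma\pi)^*C^*\Big),
\end{align*}
and the bracketed operator is exactly $P_\infty$ by Definition \ref{semigram}. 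Taking square roots yields the claimed formula, and the Lyapunov characterization of $P_\infty$ is precisely Theorem \ref{semiLyapunov}.

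The main obstacle is the justification of the interchange of trace and integral and the finiteness of the quantities involved; this is where the Hilbert-Schmidt hypothesis on $(I-\sigma\pi)S(t)$ enters. I would argue that for each fixed $t$, $(I-\sigma\pi)(S(t)-S_\infty)B = (I-\sigma\pi)S(t)B$ (again using $(I-\sigma\pi)S_\infty = 0$) is Hilbert-Schmidt since $B$ is bounded and Hilbert-Schmidt operators form an ideal, so $h_e(t)$ is Hilbert-Schmidt and $\tr(h_e(t)h_e^*(t)) = \|h_e(t)\|_{\mathrm{HS}}^2 \geq 0$; the exponential decay bound $\|(S(t)-S_\infty)x\| \leq L e^{-\mu t}\|x\|$ from Theorem \ref{semichars} then controls the integrand and lets me apply a Fubini/Tonelli-type argument (e.g. expanding the trace in an orthonormal basis and using monotone convergence, the nonnegativity making the interchange with $\int_0^\infty$ unconditional) to move the trace through the Pettis integral. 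A minor additional point is verifying $P_\infty$ itself is a well-defined bounded operator so that $C(I-\sigma\pi)P_\infty(I-\sigma\pi)^*C^*$ makes sense, but that is already established in the lemma following Definition \ref{semigram}; the only genuinely new analytic work is the trace-integral exchange, and the sign-definiteness of $\tr(h_e h_e^*)$ makes this essentially routine.
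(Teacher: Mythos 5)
Your proposal is correct and follows essentially the same route as the paper's own proof: derive $h(t)-\hat h(t)=C(I-\sigma\pi)S(t)B$ from commutativity, insert $S_\infty$ using $(I-\sigma\pi)S_\infty=0$, and pass the trace through the Pettis integral to recognize $P_\infty$. Your justification of the trace--integral interchange via nonnegativity and a Tonelli-type argument is, if anything, slightly more carefully spelled out than the paper's.
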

\begin{proof}
Since $\pi$ is an invariant model reduction
\begin{align*}
 h(t)-\hat h(t) &=CS(t)B-C\sigma\hat S(t) \hat B\\
 &=C(I-\sigma \pi) S(t) B.
\end{align*}
As the right hand side above is a composition of bounded and Hilbert-Schmidt operators, $h(t)-\hat h(t)$ is Hilbert-Schmidt. Now
\begin{align*}
&\|\Sigma-\hat \Sigma\|^2_{\mathcal{H}_2}\\
&=\int_0^\infty  \tr\Big([h(t)-\hat h(t)][h(t)-\hat h(t)]^*\Big)dt  \\
&=\int_0^\infty\sum_{i=1}^\infty  \left\langle (h(t)-\hat h(t))^*e_i,(h(t)-\hat h(t))^*e_i\right \rangle dt\\
&=\sum_{i=1}^\infty\int_0^\infty  \left\langle (h(t)-\hat h(t))^*e_i,(h(t)-\hat h(t))^*e_i\right \rangle dt.
\end{align*}
Where we used the fact that $h(t)-\hat h(t)$ is Hilbert-Schmidt to justify monotone convergence to pull out the summation. Next, since $(I-\sigma\pi)S_\infty=0$ we can manipulate the inner product inside the integral by observing that
\begin{align*}
h(t)-\hat h(t)&=C(I-\sigma \pi) S(t) B\\
&=C(I-\sigma \pi) (S(t) -S_\infty)B.
\end{align*}
Moving Hilbert adjoints inside the inner product, the expression before can thus be rewritten as
\begin{align*}
&\|\Sigma-\hat \Sigma\|^2_{\mathcal{H}_2}\\
=&\sum_{i=1}^\infty\int_0^\infty  \left\langle (C(I-\sigma \pi)P_\infty)^* e_i, (I-\sigma \pi)^*C^* e_i\right \rangle dt.
\end{align*}

Next, we want to move the integral inside of the inner product. To do this, we interpret the expression as an integral in the sense of Pettis and then in the next step use that this integral commutes with bounded operators. Therefore
\begin{align*}
\|\Sigma-\hat \Sigma\|^2_{\mathcal{H}_2}&=\tr\Bigg(C(I-\sigma \pi)P_\infty (I-\sigma \pi)^*C^*\Bigg).
\end{align*}
As before $P_\infty$ denotes the semistability Gramian defined in Definition \ref{semigram}. The operator Lyapunov equation for $P_\infty$ was shown to hold in Theorem \ref{semiLyapunov}. 
\end{proof}

Sufficient conditions for the impulse responses to be Hilbert-Schmidt arise when the input operator $B$ and the output operator $C$ are of finite rank, see \cite{curtain2001compactness}.

\subsection{Worked Example for the Heat Equation}
We now show how Theorem \ref{mytheorem} can be applied. Consider the example with the heated bar on $P=[0,1]$ with insulated boundary points $\pder{x}{p}(0,t)=\pder{x}{p}(1,t)=0$, initial distribution of heat $x(p,0)=x_0(p)$ and a source term $u$. This can be recast in terms of a system $A=\dder{p}$,  $\Sigma=(A, I, -)$ on  $L^2[0,1]$ with $D(A)=H^1_0$. Note that this model is not strictly stable but semistable since every function affine in $p$ is an element of $\ker A$ and thus for some initial conditional also a possible equilibrium distribution of heat.

Let $\pi$ be the projection on the first $N$ eigenvectors of $A$ and $\sigma$ the embedding of this into $L^2[0,1]$. It can be shown that the semigroup is given by (see \cite{curtain2012introduction})
 \begin{align*}
 S(t)x=\langle x, 1\rangle + \sum_{n=1}^\infty  e^{-n^2\pi^2 t}\langle x(\cdot), \cos(n \pi \cdot)  \rangle \cos(n \pi \cdot) .
 \end{align*}
 It is easy to see that for any $v = \pi x, x \in L^2[0,1]$ the reduced semigroup is given by
 \begin{align*}
 \hat S(t) v &=\langle v, 1\rangle + \sum_{n=1}^N  e^{-n^2\pi^2 t}\langle v(\cdot), \cos(n \pi \cdot)  \rangle \cos(n \pi \cdot)\\
 &=S(t)\pi x = \pi S(t) x,
 \end{align*}
which also verifies that $(\pi, \sigma, \hat S)$ makes for an invariant model reduction. Before we apply our Theorem, we need to verify that the Hilbert-Schmidt norm is finite
  \begin{align*}
 \tr(S^*S)&=\sum_{n=0}^\infty \langle S \cos(n\pi x), S\cos(n\pi x) \rangle \\&+\sum_{n=1}^\infty \langle S \sin(n\pi x), S\sin(n\pi x) \rangle\\
 &\leq 1 + \sum_{n=1}^\infty e^{-2n^2 \pi^2 t} <\infty
 \end{align*}
 for each $t\geq 0$, and so $S(t)$ is Hilbert-Schmidt so that Theorem \ref{mytheorem} is applicable.
 Now
\begin{align*}
&\tr((I-\sigma \pi)P_\infty (I-\sigma\pi))\\&=2\int_0^\infty  \sum_{n=N+1}^\infty\| e^{-n^2 \pi^2 t}\cos (n\pi \cdot)\|^2dt.
\end{align*}
Computing the integrand yields
\begin{align*}
\|e^{-n^2\pi^2 t} \cos (n \pi \cdot)\|^2 &=\int_0^1\left| e^{-n^2\pi^2 t} \cos (n\pi q) \right |^2 dq\\
&=e^{-2n^2\pi^2 t} \int_0^1 |\cos n \pi q |^2 dq\\
&=\frac{e^{-2n^2\pi^2 t}}{2}.
\end{align*}
We see via Theorem \ref{mytheorem} and integration that
\begin{align*}
\|\Sigma-\hat \Sigma\|_{\mathcal{H}_2}&=2\int_0^\infty  \sum_{n=N+1}^\infty\| e^{-n^2 \pi^2 t}\cos (n\pi \cdot)\|^2dt\\
&=2 \sum_{n=N+1}^\infty\int_0^\infty  \frac{e^{-2n^2\pi^2 t}}{2} dt  \\
&=\sum_{n=N+1}^\infty \frac{1}{\pi^2 n^2}
\end{align*}
where for instance monotone convergence can be used to exchange the sum and the integral.

\subsection{A Computational Perspective}
In the example above we relied heavily on the fact that the heat equation is such a well-studied object and that the semigroup $S(t)$ was explicitly available. In many applications, this is not always the case. Here, we sketch an alternative method for computation of the model error. We note that taken in combination with Theorem \ref{theexplicittheorem}, Theorem \ref{mytheorem}  allows us to compute the model error without explicit mention of the semigroup $S(t)$. Instead one may apply the following program:
\begin{enumerate}
\item Compute the kernel of $A$.
\item Find an arbitrary solution, $P$, of the semistability  Lyapunov equation.
\item Apply the projection onto the orthogonal complement of the kernel of $A$ according to $P_\infty = (I-S_\infty)P$.
\item Compute the trace as in Theorem \ref{mytheorem}.
\end{enumerate}

If $A$ is a partial differential operator this amounts to solving a sequence of partial differential equations, which in itself is often a difficult task. Nevertheless, when the semigroup $S(t)$ is particularly hard to compute, this provides an alternative path for the application of Theorem \ref{mytheorem}. One can imagine that this may also be useful for numerical evaluation of the error.

\section{Discussion and Conclusion}

Here we have investigated a class of model reductions based the commutativity assumption $\pi A = \hat A \pi$ and shown these to preserve semistability and approximate controllability. Further, we have given an exact a priori formula for the $\mathcal{H}_2$-errors for this class of model reductions and  further characterized the key quantity in this bound, known as the semistability Gramian, as the unique solution of an operator Lyapunov equation, satisfying an extra geometric constraint.

A natural focus for future research would be to do away with the commutativty assumption $\pi A = \hat A \pi$. This would be rather cumbersome as most proofs in Section 5 rest crucially on this assumption, but a reasonable attempt might involve first the computation of the $C_0$-semigroups of generators of the form $\pi A \sigma$ via for instance the Trotter product formula, \cite{trotter1959product} or by introduction of an error system.

We also note that Theorem \ref{theexplicittheorem} introduces a new unknown into the theory. For this theorem to achieve full potency, the computational feasibility of the semistability Lyapunov equation needs to be further investigated. Moreover, there is reason to believe that our results are applicable to numerical methods for partial differential equations such as finite element methods and it would be interesting to see if this connection could be of use. Here, we especially emphasize the shared use of both projective methods, as seen in the example, and the shared use of norms based on energy. 

\section*{Acknowledgements}
This work was supported in part by the Swedish Research Council (grant 2016-00861), and the Swedish Foundation for Strategic Research (project CLAS). The authors would also like to express their gratitude to Henrik Sandberg and Erik Lindgren for valuable discussions and insight. Finally, we thank three anonymous referees for their insightful comments.







\bibliographystyle{IEEEtranN}
\bibliography{confpaperv2}

\end{document}